\begin{document}

\newcounter{lemma}
\newcommand{\lemma}{\par \refstepcounter{lemma}%
{\bf Лемма \arabic{lemma}.}}

\newcounter{corollary}
\newcommand{\corollary}{\par \refstepcounter{corollary}%
{\bf Следствие \arabic{corollary}.}}

\newcounter{remark}
\newcommand{\remark}{\par \refstepcounter{remark}%
{\bf Замечание \arabic{remark}.}}

\newcounter{theorem}
\newcommand{\theorem}{\par \refstepcounter{theorem}%
{\bf Теорема \arabic{theorem}.}}

\newcounter{proposition}
\newcommand{\proposition}{\par \refstepcounter{proposition}%
{\bf Предложение \arabic{proposition}.}}

\renewcommand{\refname}{\centerline{\bf Список литературы}}

\newcommand{\proof}{{\it Доказательство.\,\,}}

\noindent УДК 517.5

{\bf Е.А.~Севостьянов} (Житомирский государственный университет им.\
И.~Франко)

\medskip
{\bf Є.О.~Севостьянов} (Житомирський державний університет ім.\
І.~Франко)

\medskip
{\bf E.A.~Sevost'yanov} (Zhitomir State University of I.~Franko)

\medskip
{\bf Об устранении изолированных особенностей классов
Орлича--Соболева с ветвлением}

{\bf Про усунення ізольованих сингулярностей класів Орліча--Соболєва
з розгалуженням}

{\bf On removability of isolated singularities of Orlicz--Sobolev
classes with bran\-ching}

\medskip
Изучается локальное поведение замкнуто-открытых дискретных
отображений классов Орлича--Соболева в ${\Bbb R}^n,$ $n\ge 3.$
Установлено, что указанные отображения $f$ имеют непрерывное
продолжение в изолированную точку $x_0$ границы области
$D\setminus\{x_0\},$ как только их внутренняя дилатация имеет
мажоранту класса $FMO$ (конечного среднего колебания) в указанной
точке и, кроме того, предельные множества отображения $f$ в $x_0$ и
на $\partial D$ не пересекаются. Другим достаточным условием
возможности непрерывного продолжения указанных отображений является
расходимость некоторого интеграла.

\medskip
Вивчається локальна поведінка замкнено-відкритих дискретних
відображень класів Орліча--Соболєва в ${\Bbb R}^n,$ $n\ge 3.$
Встановлено, що вказані відображення мають неперервне продовження до
ізольованої точки $x_0$ межі області $D\setminus\{x_0\},$ як тільки
їх внутрішня дилатація має мажоранту класу $FMO$ (скінченного
середнього коливання) у вказаній точці і, крім того, граничні
множини відображення $f$ у $x_0$ и на $\partial D$ не перетинаються.
Іншою достатньою умовою можливості неперервного продовження
зазначених відображень є розбіжність певного інтегралу.

\medskip
A local behavior of closed open discrete mappings of Orlicz--Sobolev
classes in ${\Bbb R}^n,$ $n\ge 3,$ is studied. It is proved that,
mappings mentioned above have continuous extension to isolated
boundary point $x_0$ of a domain $D\setminus\{x_0\}$ whenever $n-1$
degree of its inner dilatation has $FMO$ (finite mean oscillation)
at the point and, besides that, limit sets of $f$ at $x_0$ and
$\partial D$ are disjoint. Another sufficient condition of
possibility of continuous extension is a divergence of some
integral.

\newpage

{\bf 1. Введение.} В настоящей заметке исследуется некоторый
подкласс отображений с конечным искажением, активно изучаемых в
последнее время рядом авторов (см., напр., \cite{IM},
\cite{MRSY}--\cite{MRSY$_1$}, \cite{GRSY}, \cite{GG} и \cite{GS}).
Всюду далее $D$ -- область в ${\Bbb R}^n,$ $n\ge 2,$ $m$ -- мера
Лебега в ${\Bbb R}^n$ и ${\rm dist\,}(A,B)$ -- евклидово расстояние
между множествами $A$ и $B$ в ${\Bbb R}^n,$ $d(x, y):=|x-y|,$ $d(C)$
-- евклидов диаметр множества $C\subset{\Bbb R}^n,$
$$B(x_0, r)=\left\{x\in{\Bbb R}^n: |x-x_0|< r\right\}\,,\quad {\Bbb
B}^n := B(0, 1)\,,$$
$$S(x_0,r) = \{ x\,\in\,{\Bbb R}^n : |x-x_0|=r\}\,,\quad{\Bbb
S}^{n-1}:=S(0, 1)\,,$$
$$A(r_1,r_2,x_0)=\{ x\,\in\,{\Bbb R}^n : r_1<|x-x_0|<r_2\}\,,$$
$\omega_{n-1}$ обозначает площадь единичной сферы ${\Bbb S}^{n-1}$ в
${\Bbb R}^n,$ $\Omega_{n}$ -- объём единичного шара ${\Bbb B}^{n}$ в
${\Bbb R}^n,$ $\overline{{\Bbb R}^n}:={\Bbb R}^n\cup\{\infty\}.$ В
дальнейшем всюду символом $\Gamma(E,F,D)$ мы обозначаем семейство
всех кривых $\gamma:[a,b]\rightarrow\overline{{\Bbb R}^n},$ которые
соединяют $E$ и $F$ в $D,$ т.е. $\gamma(a)\in E,\,\gamma(b)\in F$ и
$\gamma(t)\in D$ при $t\in(a,\,b).$ Запись $f:D\rightarrow {\Bbb
R}^n$ предполагает, что отображение $f$ непрерывно в $D.$ \medskip В
дальнейшем ${\mathcal H}^k$ -- нормированная $k$-мерная мера
Хаусдорфа в ${\Bbb R}^n,$ $1\le k\le n,$ $J(x, f)={\rm det}\,
f^{\,\prime}(x)$ -- {\it якобиан отображения} $f$ в точке $x,$ где
$f^{\,\prime}(x)$ -- {\it матрица Якоби} отображения $f$ в точке
$x.$ Здесь и далее {\it предельным множеством отображения $f$
относительно множества $E\subset \overline{{\Bbb R}^n}$} называется
множество
$C(f, E):=\left\{y\in {\Bbb R}^n: \,\exists\,x_0\in E:
y=\lim\limits_{m\rightarrow \infty} f(x_m), x_m\rightarrow
x_0\right\}.$ Напомним, что отображение $f:D\rightarrow {\Bbb R}^n$
называется {\it замкнутым} (в других терминах -- {\it сохраняющим
границу отображением} (см. \cite[разд. 3, гл. II]{Vu$_1$})), если
предельное множество $C(f,
\partial D)$ отображения $f$ на границе области $D$ содержится в $\partial D^{\,\prime},$ где
$D^{\,\prime}:=f(D).$ Отображение $f:D\rightarrow {\Bbb R}^n$
называется {\it дискретным}, если прообраз $f^{-1}\left(y\right)$
каждой точки $y\in{\Bbb R}^n$ состоит только из изолированных точек.
Отображение $f:D\rightarrow {\Bbb R}^n$ называется {\it открытым},
если образ любого открытого множества $U\subset D$ является открытым
множеством в ${\Bbb R}^n.$ Пусть $U$ -- открытое множество,
$U\subset {\Bbb R}^n,$ $u:U\rightarrow {\Bbb R}$ -- некоторая
функция, $u\in L_{loc}^{\,1}(U).$ Предположим, что найдётся функция
$v\in L_{loc}^{\,1}(U),$ такая что
$\int\limits_U \frac{\partial \varphi}{\partial x_i}(x)u(x)dm(x)=
-\int\limits_U \varphi(x)v(x)dm(x)$
для любой функции $\varphi\in C_1^{\,0}(U).$ Тогда будем говорить,
что функция $v$ является {\it обобщённой производной первого порядка
функции $u$ по переменной $x_i$} и обозначать символом:
$\frac{\partial u}{\partial x_i}(x):=v.$ Функция $u\in
W_{loc}^{1,1}(U),$ если $u$ имеет обобщённые производные первого
порядка по каждой из переменных в $U,$ которые являются локально
интегрируемыми в $U.$

\medskip
Пусть $G$ -- открытое множество в ${\Bbb R}^n.$ Отображение
$f:G\rightarrow {\Bbb R}^n$ принадлежит {\it классу Соболева}
$W^{1,1}_{loc}(G),$ пишут $f\in W^{1,1}_{loc}(G),$ если все
координатные функции $f=(f_1,\ldots,f_n)$ обладают обобщёнными
частными производными первого порядка, которые локально интегрируемы
в $G$ в первой степени. Отображение $f:D\rightarrow {\Bbb R}^n$
называется {\it отображением с конечным искажением}, пишем $f\in
FD,$ если $f\in W_{loc}^{1,1}(D)$ и для некоторой функции $K(x):
D\rightarrow [1,\infty)$ выполнено условие
$\Vert f^{\,\prime}\left(x\right) \Vert^{n}\le K(x)\cdot |J(x,f)|$
при почти всех $x\in D,$ где $\Vert
f^{\,\prime}(x)\Vert=\max\limits_{h\in {\Bbb R}^n \setminus \{0\}}
\frac {|f^{\,\prime}(x)h|}{|h|}$ (см. \cite[п.~6.3, гл.~VI]{IM}.
Полагаем $l\left(f^{\,\prime}(x)\right)\,=\,\,\,\min\limits_{h\in
{\Bbb R}^n \setminus \{0\}} \frac {|f^{\,\prime}(x)h|}{|h|}.$
Отметим, что для отображений с конечным искажением корректно
определена и почти всюду конечна так называемая{\it внутренняя
дилатация $K_I(x,f)$ отображения $f$ в точке $x$}, определяемая
равенствами
\begin{equation}\label{eq0.1.1A}
K_{I}(x,f)\quad =\quad\left\{
\begin{array}{rr}
\frac{|J(x,f)|}{{l\left(f^{\,\prime}(x)\right)}^n}, & J(x,f)\ne 0,\\
1,  &  f^{\,\prime}(x)=0, \\
\infty, & \text{в\,\,остальных\,\,случаях}
\end{array}
\right.\,.
\end{equation}
Пусть $\varphi:[0,\infty)\rightarrow[0,\infty)$ -- неубывающая
функция, $f$ -- локально интегрируемая вектор-функция $n$
вещественных переменных $x_1,\ldots,x_n,$ $f=(f_1,\ldots,f_m),$
$f_i\in W_{loc}^{1,1},$ $i=1,\ldots,m.$ Будем говорить, что
$f:D\rightarrow {\Bbb R}^n$ принадлежит классу
$W^{1,\varphi}_{loc},$ пишем $f\in W^{1,\varphi}_{loc},$ если
$\int\limits_{G}\varphi\left(|\nabla f(x)|\right)\,dm(x)<\infty$ для
любой компактной подобласти $G\subset D,$ где $|\nabla
f(x)|=\sqrt{\sum\limits_{i=1}^m\sum\limits_{j=1}^n\left(\frac{\partial
f_i}{\partial x_j}\right)^2}.$ Класс $W^{1,\varphi}_{loc}$
называется классом {\it Орлича--Соболева}. Рассмотрим следующую
задачу:

\medskip
{\it пусть $x_0\in D$ и $f:D\setminus\{x_0\}\rightarrow {\Bbb R}^n$
-- отображение класса $W^{1,\varphi}_{loc}(D\setminus\{x_0\})$ с
конечным искажением, тогда при каких условиях отображение $f$ может
быть продолжено по непрерывности в точку $x_0 ?$ }

\medskip
Ответ на этот вопрос в случае, когда отображение $f$ является
гомеоморфизмом был найден нами несколько ранее (см.
\cite[теорема~5]{KRSS} и \cite[теорема~9.3]{MRSY}). Стремясь усилить
этот результат, в настоящей статье мы рассматриваем более широкий
класс замкнуто-открытых дискретных отображений. Ниже будет показано,
что для указанного класса заключение  о непрерывном продолжении в
изолированную точку границы также верно, по крайней мере, в случае
выполнения следующего дополнительного условия: $C(f, x_0)\cap C(f,
\partial D)=\varnothing.$ Разумеется, произвольные гомеоморфизмы
удовлетворяют требованиям замкнутости, дискретности, открытости, а
также указанному ограничению на предельные множества. С другой
стороны, легко указать примеры негомеоморфных замкнуто-открытых
дискретных отображений, для которых также $C(f, x_0)\cap C(f,
\partial D)=\varnothing.$ Таковым, например, является отображение с
ограниченным искажением, называемое <<закручиванием вокруг оси>> и
задаваемое в цилиндрических координатах в виде $f_m(x)=(r\cos
m\varphi, r\sin m\varphi, x_3,\ldots, x_n),$ $x=(x_1,\ldots, x_n)\in
D:={\Bbb B}^n,$ $r=|z|,$ $\varphi=\arg z,$ $z=x_1+ix_2,$ $m\in {\Bbb
N}.$ (Здесь $x_0=0$). Не лишним будет отметить, что в произвольной
меньшей области указанное отображение $f_m$ при некотором $m$ уже не
замкнуто. Скажем, это относится к области $G:=B(e_1/2, 1/2)\subset
{\Bbb B}^n,$ $e_1=(1,0,\ldots,0),$ где условие $C(f, z_0)\cap C(f,
\partial G)=\varnothing$ также может нарушаться для некоторой точки $z_m\in
G$ и больших $m.$ Другой простой пример негомеоморфного
замкнуто-открытого дискретного отображения, для которого ограничение
$C(f, x_0)\cap C(f,
\partial D)=\varnothing$ выполняется, может быть дан в виде $f(z)=z^n,$ $z\in
{\Bbb B}^2\subset {\Bbb C},$ где $x_0:=0.$

\medskip
Сформулируем главный результат настоящей заметки.

\medskip
\begin{theorem}\label{th1}
{\sl\, Пусть $n\ge 3,$ $x_0\in D,$ тогда каждое открытое, дискретное
и замкнутое ограниченное отображение $f:D\setminus\{x_0\}\rightarrow
{\Bbb R}^n$ класса $W_{loc}^{1, \varphi}(D\setminus\{x_0\})$ с
конечным искажением такое, что $C(f, x_0)\cap C(f,
\partial D)=\varnothing,$ продолжается в точку $x_0$
непрерывным образом до отображения $f:D\rightarrow {\Bbb R}^n,$ если
\begin{equation}\label{eqOS3.0a}
\int\limits_{1}^{\infty}\left[\frac{t}{\varphi(t)}\right]^
{\frac{1}{n-2}}dt<\infty
\end{equation}
и, кроме того, найдётся функция $Q\in L_{loc}^1(D),$ такая что
$K_I(x, f)\le Q(x)$ при почти всех $x\in D$ и при некотором
$\varepsilon_0>0,$ $\varepsilon_0<{\rm dist}(x_0,
\partial D),$ выполнено следующее условие расходимости интеграла:
\begin{equation}\label{eq9}
\int\limits_{0}^{\varepsilon_0}
\frac{dt}{tq_{x_0}^{\,\frac{1}{n-1}}(t)}=\infty\,.
\end{equation}
Здесь
$q_{x_0}(r):=\frac{1}{\omega_{n-1}r^{n-1}}\int\limits_{|x-x_0|=r}Q(x)\,d{\mathcal
H}^{n-1}$ обозначает среднее интегральное значение функции $Q$ над
сферой $S(x_0, r).$ В частности, заключение теоремы \ref{th1}
является верным, если $q_{x_0}(r)=\,O\left({\left[
\log{\frac{1}{r}}\right]}^{n-1}\right)$ при $r\rightarrow 0.$}
\end{theorem}

\medskip
\begin{remark}\label{rem2}
Условие (\ref{eqOS3.0a}) принадлежит Кальдерону и использовалось им
для решения задач несколько иного плана (см. \cite{Cal}).
\end{remark}

\medskip
При $n=2$ заключение теоремы \ref{th1} можно несколько усилить. Для
этой цели введём следующие обозначения. Для комплекснозначной
функции $f:D\rightarrow {\Bbb C},$ заданной в области $D\subset
{\Bbb C},$ имеющей частные производные по $x$ и $y$ при почти всех
$z=x + iy,$ полагаем $\overline{\partial} f= f_{\overline{z}} =
\left(f_x + if_y\right)/2$ и $\partial f = f_z = \left(f_x -
if_y\right)/2.$ Полагаем $\mu(z)=\mu_f(z)=f_{\overline{z}}/f_z,$ при
$f_z \ne 0$ и $\mu(z)=0$ в противном случае. Указанная
комплекснозначная функция $\mu$ называется {\it комплексной
дилатацией} отображения $f$ в точке $z.$ {\it Мак\-си\-маль\-ной
дилатацией} отображения $f$ в точке $z$ называется следующая
функция:
$K_{\mu_f}(z)\quad=\quad K_{\mu}(z)\quad=\quad\frac{1 + |\mu
(z)|}{|1 - |\mu\,(z)||}.$ Заметим, что $J(f,
z)=|f_z|^2-|f_{\overline{z}}|^2,$ где $J(f, z):={\rm
det\,}f^{\,\prime}(z),$
что может быть проверено прямым подсчётом (см., напр.,
\cite[пункт~C, гл.~I]{A}). Кроме того, заметим, что $K_I(z,
f)=K_{\mu}(z).$

\medskip
\begin{theorem}\label{th2}
{\sl\, Пусть $z_0\in D\subset {\Bbb C},$ $a, b \in {\Bbb C},$ $a\ne
b,$ тогда каждое открытое дискретное отображение
$f:D\setminus\{z_0\}\rightarrow {\Bbb C}\setminus \{a\cup b\}$
класса $W_{loc}^{1, 1}$ с конечным искажением продолжается в точку
$z_0$ непрерывным образом до отображения $f:D\rightarrow
\overline{{\Bbb C}},$ если найдётся функция $Q\in L_{loc}^1(D),$
такая что $K_{\mu}(z)\le Q(z)$ при почти всех $z\in D$ и при
некотором $\varepsilon_0>0,$ $\varepsilon_0<{\rm dist}(z_0,
\partial D),$ выполнено следующее условие расходимости интеграла
(\ref{eq9}), где $q_{z_0}(r):=\frac{1}{2\pi
r}\int\limits_{|z-z_0|=r}Q(z)\,d{\mathcal H}^{1}$ -- среднее
интегральное значение функции $Q$ над окружностью $S(z_0, r).$ В
частности, заключение теоремы \ref{th2} является верным, если
$q_{z_0}(r)=\,O\left({\left[ \log{\frac{1}{r}}\right]}\right)$ при
$r\rightarrow 0.$}
\end{theorem}

\medskip
{\bf 2. Вспомогательные сведения, основные леммы и доказательство
теоремы \ref{th1}.} Доказательство основного результата заметки
опирается на некоторый аппарат, суть которого излагается ниже (см.,
напр., \cite{MRSY}). Напомним некоторые определения, связанные с
понятием поверхности, интеграла по поверхности, а также модулей
семейств кривых и поверхностей.

\medskip Пусть $\omega$ -- открытое множество в $\overline{{\Bbb
R}^k}:={\Bbb R}^k\cup\{\infty\},$ $k=1,\ldots,n-1.$ Непрерывное
отображение $S:\omega\rightarrow{\Bbb R}^n$ будем называть {\it
$k$-мерной поверхностью} $S$ в ${\Bbb R}^n.$ Число прообразов
$N(y, S)={\rm card}\,S^{-1}(y)={\rm card}\,\{x\in\omega:S(x)=y\},\
y\in{\Bbb R}^n$ будем называть {\it функцией кратности} поверхности
$S.$ Другими словами, $N(y, S)$ -- кратность накрытия точки $y$
поверхностью $S.$ Пусть $\rho:{\Bbb R}^n\rightarrow\overline{{\Bbb
R}^+}$ -- борелевская функция, в таком случае интеграл от функции
$\rho$ по поверхности $S$ определяется равенством:  $\int\limits_S
\rho\,d{\mathcal{A}}:=\int\limits_{{\Bbb R}^n}\rho(y)\,N(y,
S)\,d{\mathcal H}^ky.$
Пусть $\Gamma$ -- семейство $k$-мерных поверхностей $S.$ Борелевскую
функцию $\rho:{\Bbb R}^n\rightarrow\overline{{\Bbb R}^+}$ будем
называть {\it допустимой} для семейства $\Gamma,$ сокр. $\rho\in{\rm
adm}\,\Gamma,$ если
\begin{equation}\label{eq8.2.6}\int\limits_S\rho^k\,d{\mathcal{A}}\ge 1\end{equation}
для каждой поверхности $S\in\Gamma.$
{\it Модулем} семейства $\Gamma$ назовём величину
$M(\Gamma)=\inf\limits_{\rho\in{\rm adm}\,\Gamma} \int\limits_{{\Bbb
R}^n}\rho^n(x)\,dm(x).$ Заметим, что модуль семейств поверхностей,
определённый таким образом, представляет собой внешнюю меру в
пространстве всех $k$-мерных поверхностей (см. \cite{Fu}). Говорят,
что некоторое свойство $P$ выполнено для {\it почти всех
поверхностей} области $D,$ если оно имеет место для всех
поверхностей, лежащих в $D,$ кроме, быть может, некоторого их
подсемейства, модуль которого равен нулю. В частности, говорят, что
некоторое свойство выполнено для {\it почти всех кривых} области
$D$, если оно имеет место для всех кривых, лежащих в $D$, кроме,
быть может, некоторого их подсемейства, модуль которого равен нулю.

Будем говорить, что измеримая по Лебегу функция $\rho:{\Bbb
R}^n\rightarrow\overline{{\Bbb R}^+}$ {\it обобщённо допустима} для
семейства $\Gamma$ $k$-мерных поверхностей $S$ в ${\Bbb R}^n,$ сокр.
$\rho\in{\rm ext}\,{\rm adm}\,\Gamma,$ если соотношение
(\ref{eq8.2.6}) выполнено для почти всех поверхностей $S$ семейства
$\Gamma.$ {\it Обобщённый модуль} $\overline M(\Gamma)$ семейства
$\Gamma$ определяется равенством
$\overline M(\Gamma)= \inf\int\limits_{{\Bbb R}^n}\rho^n(x)\,dm(x),$
где точная нижняя грань берётся по всем функциям $\rho\in{\rm
ext}\,{\rm adm}\,\Gamma.$ Очевидно, что при каждом $p\in(0,\infty),$
$k=1,\ldots,n-1,$ и каждого семейства $k$-мерных поверхностей
$\Gamma$ в ${\Bbb R}^n,$ выполнено равенство $\overline
M(\Gamma)=M(\Gamma).$

Следующий класс отображений представляет собой обобщение
квазиконформных отображений в смысле кольцевого определения по
Герингу (\cite{Ge$_3$}) и отдельно исследуется (см., напр.,
\cite[глава~9]{MRSY}). Пусть $D$ и $D^{\,\prime}$ -- заданные
области в $\overline{{\Bbb R}^n},$ $n\ge 2,$
$x_0\in\overline{D}\setminus\{\infty\}$ и $Q:D\rightarrow(0,\infty)$
-- измеримая по Лебегу функция. Будем говорить, что $f:D\rightarrow
D^{\,\prime}$ -- {\it нижнее $Q$-отображение в точке} $x_0,$ как
только
\begin{equation}\label{eq1A}
M(f(\Sigma_{\varepsilon}))\ge \inf\limits_{\rho\in{\rm
ext\,adm}\,\Sigma_{\varepsilon}}\int\limits_{D\cap
R_{\varepsilon}}\frac{\rho^n(x)}{Q(x)}\,dm(x)
\end{equation}
для каждого кольца $A(\varepsilon, r_0, x_0),$ $r_0\in(0,d_0),$
$d_0=\sup\limits_{z\in D}|z-z_0|,$
где $\Sigma_{\varepsilon}$ обозначает семейство всех пересечений
сфер $S(x_0, r)$ с областью $D,$ $r\in (0, r_0).$ Примеры таких
отображений несложно указать (см. теорему \ref{thOS4.1} ниже).

Отметим, что выражения $"$почти всех кривых$"$ и $"$почти всех
по\-вер\-х\-но\-с\-тей$"$ в отдельных случаях могут иметь две
различные интерпретации (в частности, если речь идёт о семействе
сфер, то $"$почти всех$"$ может пониматься как относительно
множества значений $r,$ так и конформного модуля семейства сфер,
рассматриваемого как частный случай семейства поверхностей).
Следующее утверждение вносит некоторую ясность между указанными
интерпретациями и может быть установлено полностью по аналогии с
\cite[лемма~9.1]{MRSY}.

\medskip
\begin{lemma}\label{lem8.2.11}{\sl\, Пусть $x_0\in D.$ Если некоторое
свойство $P$ имеет место для почти всех сфер $D(x_0, r):=S(x_0,
r)\cap D,$ где $"$почти всех$"$ понимается в смысле модуля семейств
поверхностей, то $P$ также имеет место для почти всех сфер $D(x_0,
r)$ относительно линейной меры Лебега по параметру $r\in {\Bbb R }.$
Обратно, пусть $P$ имеет место для почти всех сфер $D(x_0,
r):=S(x_0, r)\cap D$ относительно линейной меры Лебега по $r\in
{\Bbb R},$ тогда $P$ также имеет место для почти всех поверхностей
$D(x_0, r):=S(x_0, r)\cap D$ в смысле модуля семейств
поверхностей.}\end{lemma}

\medskip
Следующее утверждение облегчает проверку бесконечной серии
неравенств в (\ref{eq1A}) и может быть установлено аналогично
доказательству \cite[теорема~9.2]{MRSY}.

\medskip
\begin{lemma}\label{lem4}{\sl\,
Пусть $D,$  $D^{\,\prime}\subset\overline{{\Bbb R}^n},$
$x_0\in\overline{D}\setminus\{\infty\}$ и $Q:D\rightarrow(0,\infty)$
-- измеримая по Лебегу функция. Отображение $f:D\rightarrow
D^{\,\prime}$ является нижним $Q$-отображением в точке $x_0$ тогда и
только тогда, когда
%
$M(f(\Sigma_{\varepsilon}))\ge\int\limits_{\varepsilon}^{r_0}
\frac{dr}{||\,Q||_{n-1}(r)}\quad\forall\ \varepsilon\in(0,r_0)\,,\
r_0\in(0,d_0),$
%
где, как и выше, $\Sigma_{\varepsilon}$ обозначает семейство всех
пересечений сфер $S(x_0, r)$ с областью $D,$ $r\in (\varepsilon,
r_0),$
$ \Vert
Q\Vert_{n-1}(r)=\left(\int\limits_{D(x_0,r)}Q^{n-1}(x)\,d{\mathcal{A}}\right)^{\frac{1}{n-1}}$
-- $L_{n-1}$-норма функции $Q$ над сферой $D(x_0,r)=\{x\in D:
|x-x_0|=r\}=D\cap S(x_0,r)$.}
\end{lemma}

\medskip
Напомним, что {\it конденсатором} называют пару
$E=\left(A,\,C\right),$ где $A$ -- открытое множество в ${\Bbb
R}^n,$ а $C$ -- компактное подмножество $A.$ {\it Ёмкостью}
конденсатора $E$ называется следующая величина:
%
%
${\rm cap}\,E={\rm cap}\,\left(A,\,C\right)= \inf\limits_{u\in
W_0(E)}\,\,\int\limits_A |\nabla u(x)|^n\,\,dm(x),$
%
где $W_0(E)=W_0\left(A,\,C\right)$ -- семейство неотрицательных
непрерывных функций $u:A\rightarrow{\Bbb R}$ с компактным носителем
в $A,$ таких что $u(x)\ge 1$ при $x\in C$ и $u\in ACL.$
Здесь, как обычно, $|\nabla
u|={\left(\sum\limits_{i=1}^n\,{\left(\partial_i u\right)}^2
\right)}^{1/2}.$ Следующее утверждение имеет важное значение для
доказательства многих результатов настоящей работы (см.
\cite[предложение~10.2, гл.~II]{Ri}).

\medskip
\begin{proposition}\label{pr1*!}{\,\sl Пусть $E=(A,\,C)$ --
произвольный конденсатор в ${\Bbb R}^n$ и пусть $\Gamma_E$ --
семейство всех кривых вида $\gamma:[a,\,b)\rightarrow A$ таких, что
$\gamma(a)\in C$ и $|\gamma|\cap\left(A\setminus
F\right)\ne\varnothing$ для произвольного компакта $F\subset A.$
Тогда
${\rm cap}\,E=M(\Gamma_E).$
}
\end{proposition}

\medskip
Следующие важные сведения, касающиеся ёмкости пары множеств
относительно области, могут быть найдены в работе В.~Цимера
\cite{Zi}. Пусть $G$ -- ограниченная область в ${\Bbb R}^n$ и $C_{0}
, C_{1}$ -- непересекающиеся компактные множества, лежащие в
замыкании $G.$ Полагаем  $R=G \setminus (C_{0} \cup C_{1})$ и
$R^{\,*}=R \cup C_{0}\cup C_{1}.$ {\it Конформной ёмкостью пары
$C_{0}, C_{1}$ относительно замыкания $G$} называется величина
$C[G, C_{0}, C_{1}] = \inf \int\limits_{R} \vert \nabla u \vert^{n}\
dm(x),$
где точная нижняя грань берётся по всем функциям $u,$ непрерывным в
$R^{\,*},$ $u\in ACL(R),$ таким что $u=1$ на $C_{1}$ и $u=0$ на
$C_{0}.$ Указанные функции будем называть {\it допустимыми} для
величины $C [G, C_{0}, C_{1}].$ Мы будем говорить, что  {\it
множество $\sigma \subset {\Bbb R}^n$ разделяет $C_{0}$ и $C_{1}$ в
$R^{\,*}$}, если $\sigma \cap R$ замкнуто в $R$ и найдутся
непересекающиеся множества $A$ и $B,$ являющиеся открытыми в
$R^{\,*} \setminus \sigma,$ такие что $R^{\,*} \setminus \sigma =
A\cup B,$ $C_{0}\subset A$ и $C_{1} \subset B.$ Пусть $\Sigma$
обозначает класс всех множеств, разделяющих $C_{0}$ и $C_{1}$ в
$R^{\,*}.$ Для числа $n^{\prime} = n/(n-1)$ определим величину
%
$$\widetilde{M_{n^{\prime}}}(\Sigma)=\inf\limits_{\rho\in
\widetilde{\rm adm} \Sigma} \int\limits_{{\Bbb
R}^n}\rho^{\,n^{\prime}}dm(x)\,,$$
%
где запись $\rho\in \widetilde{\rm adm}\,\Sigma$ означает, что
$\rho$ -- неотрицательная борелевская функция в ${\Bbb R}^n$ такая,
что
%
$$\int\limits_{\sigma \cap R}\rho d{\mathcal H}^{n-1} \ge
1\quad\forall\, \sigma \in \Sigma\,.$$
%
Заметим, что согласно результата Цимера
\begin{equation}\label{eq3}
\widetilde{M_{n^{\,\prime}}}(\Sigma)=C[G , C_{0} ,
C_{1}]^{\,-1/(n-1)}\,,
\end{equation}
см. \cite[теорема~3.13]{Zi}. Заметим также, что согласно результата
Хессе
\begin{equation}\label{eq4}
M(\Gamma(E, F, D))= C[D, E, F]\,,
\end{equation}
как только $(E \cap F)\cap
\partial D = \varnothing,$
см. \cite[теорема~5.5]{Hes}.

\medskip
Напомним, что отображение $f:X\rightarrow Y$ между пространствами с
мерами $(X, \Sigma, \mu)$ и $(Y, \Sigma^{\,\prime}, \mu^{\,\prime})$
обладает {\it $N$-свой\-с\-т\-вом} (Лузина), если из условия
$\mu(S)=0$ следует, что $\mu^{\,\prime}(f(S))=0.$ Следующее
вспомогательное утверждение получено в работе \cite{KRSS} (см.
теорема 1 и следствие 2).

\medskip
\begin{proposition}\label{pr1}
{\sl\, Пусть $D$ -- область в ${\Bbb R}^n,$ $n\ge 3,$
$\varphi:(0,\infty)\rightarrow (0,\infty)$ -- неубывающая функция,
удовлетворяющая условию (\ref{eqOS3.0a}). Тогда:

1) Если $f:D\rightarrow{\Bbb R}^n$ -- непрерывное открытое
отображение класса $W^{1,\varphi}_{loc}(D),$ то $f$ имеет почти
всюду полный дифференциал в $D;$

2) Любое непрерывное отображение $f\in W^{1,\varphi}_{loc}$ обладает
$N$-свойством относительно $(n-1)$-мерной меры Хаусдорфа, более
того, локально абсолютно непрерывно на почти всех сферах $S(x_0, r)$
с центром в заданной предписанной точке $x_0\in{\Bbb R}^n$. Кроме
того, на почти всех таких сферах $S(x_0, r)$ выполнено условие
${\mathcal H}^{n-1}(f(E))=0,$ как только $|\nabla f|=0$ на множестве
$E\subset S(x_0, r).$ (Здесь $"$почти всех$"$ понимается
относительно линейной меры Лебега по параметру $r$).}

\end{proposition}

\medskip
Для отображения $f:D\,\rightarrow\,{\Bbb R}^n,$ множества $E\subset
D$ и $y\,\in\,{\Bbb R}^n,$  определим {\it функцию кратности $N(y,
f, E)$} как число прообразов точки $y$ во множестве $E,$ т.е.
\begin{equation}\label{eq1.7A}
N(y, f, E)\,=\,{\rm card}\,\left\{x\in E: f(x)=y\right\}\,,
%
N(f, E)\,=\,\sup\limits_{y\in{\Bbb R}^n}\,N(y, f, E)\,.
\end{equation}
Обозначим через $J_{n-1}f(a)$ величину, означающую $(n-1)$-мерный
якобиан отображения $f$ в точке $a$ (см. \cite[раздел~3.2.1]{Fe}).
Предположим, что отображение $f:D\rightarrow {\Bbb R}^n$
дифференцируемо в точке $x_0\in D$ и матрица Якоби
$f^{\,\prime}(x_0)$ невырождена, $J(x_0, f)={\rm
det\,}f^{\,\prime}(x_0)\ne 0.$ Тогда найдутся системы векторов
$e_1,\ldots, e_n$ и $\widetilde{e_1},\ldots,\widetilde{e_n}$ и
положительные числа $\lambda_1(x_0),\ldots,\lambda_n(x_0),$
$\lambda_1(x_0)\le\ldots\le\lambda_n(x_0),$ такие что
$f^{\,\prime}(x_0)e_i=\lambda_i(x_0)\widetilde{e_i}$ (см.
\cite[теорема~2.1 гл. I]{Re}), при этом,
\begin{equation}\label{eq11C}
|J(x_0, f)|=\lambda_1(x_0)\ldots\lambda_n(x_0),\quad \Vert
f^{\,\prime}(x_0)\Vert =\lambda_n(x_0)\,, \quad
l(f^{\,\prime}(x))=\lambda_1(x_0)\,,\end{equation}
\begin{equation}\label{eq41}
K_I(x_0,
f)=\frac{\lambda_1(x_0)\cdots\lambda_n(x_0)}{\lambda^n_1(x_0)}\,,
\end{equation}
см. \cite[соотношение~(2.5), разд.~2.1, гл.~I]{Re}. Числа
$\lambda_1(x_0),\ldots\lambda_n(x_0)$ называются {\it главными
значениями}, а вектора $e_1,\ldots, e_n$ и
$\widetilde{e_1},\ldots,\widetilde{e_n}$ -- {\it главными векторами
} отображения $f^{\,\prime}(x_0).$ Из геометрического смысла
$(n-1)$-мерного якобиана, а также первого соотношения в
(\ref{eq11C}) вытекает, что
\begin{equation}\label{eq10C}
\lambda_1(x_0)\cdots\lambda_{n-1}(x_0)\le J_{n-1}f(x_0)\le
\lambda_2(x_0)\cdots\lambda_n(x_0)\,,
\end{equation}
в частности, из (\ref{eq10C}) следует, что $J_{n-1}f(x_0)$
положителен во всех тех точках $x_0,$ где положителен якобиан
$J(x_0, f).$

\medskip
Следующие два утверждения несут в себе основную смысловую нагрузку
данной заметки. Первое из них впервые установлено для случая
гомеоморфизмов в работе \cite{KR$_1$} (см. теорему 2.1).

\medskip
\begin{lemma}{}\label{thOS4.1} {\sl Пусть $D$ -- область в ${\Bbb R}^n,$
$n\ge 3,$ $\varphi:(0,\infty)\rightarrow (0,\infty)$ -- неубывающая
функция, удовлетворяющая условию (\ref{eqOS3.0a}).
Если $n\ge 3,$ то каждое открытое дискретное отображение
$f:D\rightarrow {\Bbb R}^n$ с конечным искажением класса
$W^{1,\varphi}_{loc}$ такое, что $N(f, D)<\infty,$ является нижним
$Q$-отображением в каждой точке $x_0\in\overline{D}$ при $Q(x)=N(f,
D)\cdot K^{\frac{1}{n-1}}_I(x, f),$ где внутренняя дилатация $K_I(x,
f)$ отображения $f$ в точке $x$ определена соотношением
(\ref{eq0.1.1A}), а кратность $N(f, D)$ определена вторым
соотношением в (\ref{eq1.7A}).}
\end{lemma}

\medskip
\begin{proof}
Заметим, что $f$ дифференцируемо почти всюду ввиду предложения
\ref{pr1}. Пусть $B$ -- борелево множество всех точек $x\in D,$ в
которых $f$ имеет полный дифференциал $f^{\,\prime}(x)$ и $J(x,
f)\ne 0.$ Применяя теорему Кирсбрауна и свойство единственности
аппроксимативного дифференциала (см. \cite[пункты~2.10.43 и
3.1.2]{Fe}), мы видим, что множество $B$ представляет собой не более
чем счётное объединение борелевских множеств $B_l,$
$l=1,2,\ldots\,,$ таких, что сужения $f_l=f|_{B_l}$ являются
билипшецевыми гомеоморфизмами (см., напр., \cite[пункты~3.2.2, 3.1.4
и 3.1.8]{Fe}). Без ограничения общности, мы можем полагать, что
множества $B_l$ попарно не пересекаются. Обозначим также символом
$B_*$ множество всех точек $x\in D,$ в которых $f$ имеет полный
дифференциал, однако, $f^{\,\prime}(x)=0.$

\medskip
Ввиду построения, множество $B_0:=D\setminus \left(B\bigcup
B_*\right)$ имеет лебегову меру нуль. Следовательно, по
\cite[теорема~9.1]{MRSY}, ${\mathcal H}^{n-1}(B_0\cap S_r)=0$ для
почти всех сфер $S_r:=S(x_0,r)$ с центром в точке
$x_0\in\overline{D},$ где $"$почти всех$"$ следует понимать в смысле
конформного модуля семейств поверхностей. По лемме \ref{lem8.2.11}
также ${\mathcal H}^{n-1}(B_0\cap S_r)=0$ при почти всех $r\in {\Bbb
R}.$

\medskip
По предложению \ref{pr1} и из условия ${\mathcal H}^{n-1}(B_0\cap
S_r)=0$ для почти всех $r\in {\Bbb R}$ вытекает, что ${\mathcal
H}^{n-1}(f(B_0\cap S_r))=0$ для почти всех $r\in {\Bbb R}.$ По этому
предложению также ${\mathcal H}^{n-1}(f(B_*\cap S_r))=0,$ поскольку
$f$ -- отображение с конечным искажением и, значит, $\nabla f=0$
почти всюду, где $J(x, f)=0.$

\medskip
Пусть $\Gamma$ -- семейство всех пересечений сфер $S_r,$
$r\in(\varepsilon, r_0),$ $r_0<d_0=\sup\limits_{x\in D}\,|x-x_0|,$
с областью $D.$ Для заданной функции $\rho_*\in{\rm
adm}\,f(\Gamma),$ $\rho_*\equiv0$ вне $f(D),$ полагаем $\rho\equiv
0$ вне $D$ и на $B_0,$
$$\rho(x)\ \colon=\ \rho_*(f(x))\left(|J(x, f)|\cdot
K_I^{\frac{1}{n-1}}(x, f)\right)^{\frac{1}{n}} \qquad\text{при}\
x\in D\setminus B_0\,.$$
Учитывая соотношения (\ref{eq11C}) и (\ref{eq10C}),
\begin{equation}\label{eq12C}
\rho(x)\ge\rho_*(f(x))(J_{n-1}f(x))^{\frac{1}{n-1}}\,.
\end{equation}
Пусть $D_{r}^{\,*}\in f(\Gamma),$ $D_{r}^{\,*}=f(D\cap S_r).$
Заметим, что
$D_{r}^{\,*}=\bigcup\limits_{i=0}^{\infty} f(S_r\cap B_i)\bigcup
f(S_r\cap B_*)$
и, следовательно, для почти всех $r\in (0, r_0)$
\begin{equation}\label{eq10B}
1\le \int\limits_{D^{\,*}_r}\rho^{n-1}_*(y)d{\mathcal A_*} \le
\sum\limits_{i=0}^{\infty} \int \limits_{f(S_r\cap B_i)}
\rho^{n-1}_*(y)N (y, S_r\cap B_i)d{\mathcal H}^{n-1}y +
\end{equation}
$$+\int\limits_{f(S_r\cap B_*)} \rho^{n-1}_*(y)N (y, S_r\cap B_*
)d{\mathcal H}^{n-1}y\,.$$ Учитывая доказанное выше, из
(\ref{eq10B}) мы получаем, что
\begin{equation}\label{eq11B}
1\le \int\limits_{D^{\,*}_r}\rho^{n-1}_*(y)d{\mathcal A_*} \le
\sum\limits_{i=1}^{\infty} \int \limits_{f(S_r\cap B_i)}
\rho^{n-1}_*(y)N (y, S_r\cap B_i)d{\mathcal H}^{n-1}y
\end{equation}
для почти всех $r\in (0, r_0).$
Рассуждая покусочно на $B_i,$ $i=1,2,\ldots,$ ввиду \cite[1.7.6 и
теорема~3.2.5]{Fe} и (\ref{eq12C}) мы получаем, что
$$\int\limits_{B_i\cap S_r}\rho^{n-1}\,d{\mathcal A}=
\int\limits_{B_i\cap S_r}\rho_*^{n-1}(f(x))\left(|J(x, f)|\cdot
K_I^{\frac{1}{n-1}}(x, f)\right)^{\frac{n-1}{n}}\,d{\mathcal A}=$$
$$=\int\limits_{B_i\cap S_r}\rho_*^{n-1}(f(x))\cdot\frac{\left(|J(x, f)|\cdot
K_I^{\frac{1}{n-1}}(x, f)\right)^{\frac{n-1}{n}}}{J_{n-1}f(x)}\cdot
J_{n-1}f(x)\,d{\mathcal A}\ge \int\limits_{B_i\cap
S_r}\rho_*^{n-1}(f(x))\cdot J_{n-1}f(x)\,d{\mathcal A}=$$
\begin{equation}\label{eq12B}
=\int\limits_{f(B_i\cap S_r)}\rho_{*}^{n-1}\,N(y, S_r\cap
B_i)d{\mathcal H}^{n-1}y
\end{equation} для почти всех $r\in (0, r_0).$
Из (\ref{eq11B}) и (\ref{eq12B}) вытекает, что
$\rho\in{\rm{ext\,adm}}\,\Gamma.$

Замена переменных на каждом $B_l,$ $l=1,2,\ldots\,,$ (см., напр.,
\cite[теорема~3.2.5]{Fe}) и свойство счётной аддитивности интеграла
приводят к оценке
$\int\limits_{D}\frac{\rho^n(x)}{K^{\frac{1}{n-1}}_I(x,
f)}\,dm(x)\le \int\limits_{f(D)}N(f, D)\cdot \rho^{\,n}_*(y)\,
dm(y),$ что и завершает доказательство.
\end{proof}$\Box$

\medskip
\begin{remark}\label{rem1}
Заключение леммы \ref{thOS4.1} при $n=2$ остаётся справедливым для
классов Соболева $W_{loc}^{1, 1}$ при аналогичных условиях, за
исключением дополнительного условия Кальдерона (\ref{eqOS3.0a}).
Чтобы в этом убедиться, необходимо повторить доказательство этой
леммы при $n=2,$ где необходимо учесть наличие $N$-свойства
указанных отображений на почти всех окружностях, что обеспечивается
свойством $ACL$ для произвольных классов Соболева (см.
\cite[теорема~1, п.~1.1.3, $\S$~1.1, гл.~I]{Maz}).
\end{remark}

\medskip
Имеет место следующее утверждение (см. \cite[лемма~3.11]{MRV$_2$} и
\cite[лемма~2.6, гл.~III]{Ri}).

\medskip
\begin{proposition}\label{pr3*!} {\sl\, Для каждого $a > 0$ существует
положительное число $\delta > 0$ такое, что
${\rm cap}\,\left({\Bbb B}^n,\,C\right)\ge \delta,$
%
где $C$ -- произвольный континуум в ${\Bbb B}^n$ такой что $d(C)\ge
a.$}
\end{proposition}

\medskip
Аналог следующей леммы в случае гомеоморфизмов доказан в монографии
\cite[теорема~9.3]{MRSY} (см. также работу \cite[теорема~4.1]{KR}).

\medskip
\begin{lemma}\label{lem1}
{\sl\, Пусть $n\ge 2,$ $x_0\in D$ и $Q:D\rightarrow (0, \infty)$ -
измеримая по Лебегу функция такая, что при некотором
$\varepsilon_0>0,$ $\varepsilon_0<{\rm dist}(x_0,
\partial D),$ выполнено условие расходимости интеграла (\ref{eq9}). Тогда каждое ограниченное
открытое, дискретное и замкнутое в области $D\setminus\{x_0\}$
нижнее $Q$-отображение $f:D\setminus\{x_0\}\rightarrow {\Bbb R}^n$
продолжается в точку $x_0$ непрерывным образом до отображения
$f:D\rightarrow {\Bbb R}^n,$ если $C(f, x_0)\cap C(f,
\partial D)=\varnothing.$}
\end{lemma}

\medskip
\begin{proof}
Не ограничивая общности рассуждений, можно считать, что $x_0=0$ и
$\overline{f(D\setminus\{0\})}\subset {\Bbb B}^n.$ Предположим
противное, а именно, что отображение $f$ не может быть продолжено по
непрерывности в точку $x_0=0.$ Тогда найдутся две последовательности
$x_j$ и $x_j^{\,\prime},$ принадлежащие
$D\setminus\left\{0\right\},$ $x_j\rightarrow 0,\quad
x_j^{\,\prime}\rightarrow 0,$ такие, что
$|f(x_j)-f(x_j^{\,\prime})|\ge a>0$ для всех $j\in {\Bbb N}.$
Можно считать, что $x_j$ и $x_j^{\,\prime}$ лежат внутри шара $B(0,
r_0),$ $r_0:={\rm dist\,}(0, \partial D).$ Полагаем
$r_j=\max{\left\{|x_j|,\,|x_j^{\,\prime}|\right\}},
l_j=\min{\left\{|x_j|,\,|x_j^{\,\prime}|\right\}}.$
Соединим точки $x_j$ и $x_j^{\,\prime}$ замкнутой кривой, лежащей в
$\overline{B(0, r_j)}\setminus\left\{0\right\}.$ Обозначим эту
кривую символом $C_j$ и рассмотрим конденсатор
$E_j=\left(D\setminus\left\{0\right\}\,,C_j\right).$ В силу
открытости и непрерывности отображения $f,$ пара $f(E_j)$ также
является конденсатором. Поскольку $f$ -- открытое и замкнутое
отображение, $\partial f(D\setminus\{0\})= C(f, \partial D)\cup C(f,
0).$

\medskip
Рассмотрим при $r_j<r<r_0$ проколотый шар $G_1:=B(0,
r)\setminus\{0\}.$ Заметим, что $C_j$ -- компактное подмножество
$G_1,$ тогда $f(C_j)$ -- компактное подмножество $f(G_1).$

\medskip
Ввиду открытости $f$ имеет место включение $\partial f(G_1)\subset
C(f, 0)\cup f(S(0, r)),$ откуда ввиду замкнутости и открытости
отображения $f$ множество $\partial f(G_1)\setminus C(f, 0)$
является замкнутым в ${\Bbb R}^n.$

\medskip
Отсюда вытекает, что множество $\sigma:=\partial f(G_1)\setminus
C(f, 0)$ отделяет $f(C_j)$ от $C(f, \partial D)$ в
$f(D\setminus\{0\})\cup C(f, \partial D).$ Действительно,
$$f(D\setminus\{0\})\cup C(f, \partial D)=f(G_1)\cup\sigma\cup \left(f(D\setminus\{0\})\cup C(f, \partial D)\setminus
\overline{f(G_1)}\right)\,,$$
каждое из множеств $A:=f(G_1)$ и $B:=f(D\setminus\{0\})\cup C(f,
\partial D)\setminus \overline{f(G_1)}$ открыто в топологии
пространства $f(D\setminus\{0\})\cup C(f, \partial D),$ $A\cap
B=\varnothing,$ $C_0:=f(C_j)\subset A$ и $C_1:=C(f, \partial
D)\subset B.$

\medskip
Поскольку $\sigma\subset f(S(0, r)),$ ввиду (\ref{eq3}) и
(\ref{eq4})
\begin{equation}\label{eq5C}
M(\Gamma(f(C_j), C(f, \partial D), f(D\setminus \{0\})))\le
\frac{1}{M^{n-1}(f(\Sigma_{r}))}\,,
\end{equation}
где $\Sigma_{r}$ -- семейство сфер $S(0, r),$ $r\in (r_j, r_0).$
С другой стороны, из леммы \ref{lem4} и условия расходимости
интеграла (\ref{eq9}) вытекает, что
$M^{n-1}(f(\Sigma_{r})\rightarrow\infty$ при $j\rightarrow \infty.$
В таком случае, из (\ref{eq5C}) следует, что при $j\rightarrow
\infty$
\begin{equation}\label{eq6C}
M(\Gamma(C(f, D), f(C_j), f(D\setminus\{0\})))\rightarrow 0\,.
\end{equation}
Аналогичную процедуру проделаем относительно предельного множества
$C(f, 0).$ Именно, заметим, что $C_j$ -- компакт в $G_2:=D\setminus
\overline{B(0, \varepsilon)}$ для произвольного $\varepsilon\in (0,
l_j).$ Тогда ввиду непрерывности $f$ множество $f(C_j)$ является
компактным подмножеством $f(G_{2})=f(D\setminus \overline{B(0,
\varepsilon)})$ и, в частности, $\partial f(D\setminus
\overline{B(0, \varepsilon)})\cap f(C_j)=\varnothing.$ Далее,
заметим, что $\partial f(D\setminus\overline{B(0,
\varepsilon)})\subset C(f,
\partial D)\cup f(S(0, \varepsilon)).$ Полагаем
$\theta:=\partial f(G_{2})\setminus C(f, \partial D)$ и заметим, что
$\theta$ является замкнутым, поскольку $\partial f(G_2)\subset
f(S(0, \varepsilon))\cup C(f, \partial D)$ и $C(f, \partial D)\cap
f(S(0, \varepsilon))=\varnothing$ ввиду замкнутости отображения $f$
в $D\setminus\{0\}.$ Кроме того, заметим, что $\theta$ отделяет
$C_3:=f(C_j)$ и $C_4:=C(f, 0)$ в $f(D\setminus\{0\})\cup C(f, 0).$
Действительно,
$$f(D\setminus\{0\})\cup C(f, 0)= f(G_2)\cup \theta\cup
\left(f(D\setminus\{0\})\cup C(f,
0)\setminus\overline{f(G_2)}\right)\,,$$
$A=f(G_2)$ и $B=\left(f(D\setminus\{0\})\cup C(f,
0)\setminus\overline{f(G_2)}\right)$ открыты в топологии
пространства $f(D\setminus\{0\})\cup C(f, 0),$  $A\cap
B=\varnothing,$ $C_3:=f(C_j)\in A$ и $C_4:=C(f, 0)\in B.$

\medskip
Так как $\theta\subset f(S(0, \varepsilon)),$ ввиду (\ref{eq3}) и
(\ref{eq4}) получаем:
\begin{equation}\label{eq7C}
M(\Gamma(f(C_j), C(f, 0), f(D\setminus \{0\})))\le
\frac{1}{M^{n-1}(f(\Theta_{\varepsilon}))}\,,
\end{equation}
где $\Theta_{\varepsilon}$ -- семейство сфер $S(0, \varepsilon),$
$\varepsilon\in (0, l_j).$
С другой стороны, из леммы \ref{lem4} и условия расходимости
интеграла (\ref{eq9}) вытекает, что
$M^{n-1}(f(\Theta_{\varepsilon})=\infty.$ В таком случае, из
(\ref{eq7C}) следует, что
\begin{equation}\label{eq8C}
M(\Gamma(C(f, 0), f(C_j), f(D\setminus\{0\})))=0\,.
\end{equation}
Заметим, что ввиду предложения \ref{pr1*!} и полуаддитивности модуля
смейств кривых (см. \cite[разд.~6, гл.~I]{Va}), при $j\rightarrow
\infty$ из (\ref{eq6C}) и (\ref{eq8C}) вытекает, что
$${\rm cap\,}f(E_j)\le$$
\begin{equation}\label{eq9C}
\le M(\Gamma(C(f, 0), f(C_j), f(D\setminus\{0\})))+ M(\Gamma(C(f,
\partial D), f(C_j), f(D\setminus\{0\})))\rightarrow 0\,.
\end{equation}
С другой стороны, по предложению \ref{pr3*!} ${\rm cap\,}f(E_j)\ge
\delta>0$ при всех натуральных $j,$ что противоречит (\ref{eq9C}).
Лемма доказана.
 \end{proof} $\Box$

 \medskip
{\it Доказательство теоремы \ref{th1}} вытекает из лемм
\ref{thOS4.1} и \ref{lem1}, а также того факта, что максимальная
кратность $N(f, D)$ замкнутого открытого дискретного отображения $f$
конечна (см., напр., \cite[лемма~3.3]{MS}). $\Box$

\medskip
Теперь отдельно исследуем случай $n=2.$ Для этой цели напомним, что
отображение $f:D\rightarrow \overline{{\Bbb R}^n}$ называется {\it
кольцевым $Q$-отоб\-ра\-же\-нием в точке $x_0\,\in\,D$} (см.
\cite{MRSY}--\cite{MRSY$_1$}), если соотношение
%
$M\left(f\left(\Gamma\left(S_1,\,S_2,\,A\right)\right)\right)\ \le
\int\limits_{A} Q(x)\cdot \eta^n(|x-x_0|)\ dm(x)$ 
выполнено для любого кольца $A=A(r_1,r_2, x_0),$\, $0<r_1<r_2<
r_0:={\rm dist\,}(x_0, \partial D),$ и для каждой измеримой функции
$\eta : (r_1,r_2)\rightarrow [0,\infty ]\,$ такой, что
%
%
%
$\int\limits_{r_1}^{r_2}\eta(r)dr\ge 1.$ Отметим, что кольцевые
$Q$-гомеоморфизмы продолжаются по непрерывности в изолированные
граничные точки, причём продолженное отображение также является
гомеоморфизмом (см. \cite[лемма~4 и теорема~4]{Lom}).

\medskip
{\it Доказательство теоремы \ref{th2}.} Пусть $f$ -- отображение из
условия теоремы, тогда, в частности,  $f\in W_{loc}^{1,1},$ $f$ --
конечного искажения в $D\setminus\{z_0\},$ кроме того, $f$ дискретно
и открыто. Тогда согласно представлению Стоилова \cite[п.~5 (III),
гл.~V]{St}, $f=\varphi\circ g,$ где $g$ -- некоторый гомеоморфизм, а
$\varphi$ -- аналитическая функция. Заметим, что тогда также $g\in
W_{loc}^{1,1}$ и, кроме того, $g$ имеет конечное искажение.

Действительно, множество точек ветвления $B_{\varphi}\subset
g(D\setminus\{z_0\})$ функции $\varphi$ состоит только из
изолированных точек (см. \cite[пункты 5 и 6 (II), гл.~V]{St}).
Следовательно, $g(z)=\varphi^{-1}\circ f$ локально, вне множества
$g^{-1}\left(B_{\varphi}\right).$ Ясно, что множество
$g^{-1}\left(B_{\varphi}\right)$ также состоит из изолированных
точек, следовательно, $g\in ACL(D\setminus\{z_0\})$ как композиция
аналитической функции $\varphi^{-1}$ и отображения $f\in
W_{loc}^{1,1}(D\setminus\{z_0\}).$

Покажем, что $g\in W_{loc}^{1,1}(D\setminus\{z_0\}).$ Пусть далее
$\mu_f(z)$ означает комплексную дилатацию функции $f(z),$ а
$\mu_g(z)$ -- комплексную дилатацию $g.$ Согласно \cite[(1), п.~C,
гл.~I]{A} для почти всех $z\in D\setminus\{z_0\}$ получаем:
\begin{equation}\label{eq1}
f_z=\varphi_z(g(z))g_z,\qquad
f_{\overline{z}}=\varphi_z(g(z))g_{\overline{z}}\,,
\end{equation}
$\mu_f(z)=\mu_g(z)=:\mu(z), \quad
K_{\mu_f}(z)=K_{\mu_g}(z):=K_{\mu}(z)=\frac{1+|\mu|}{|1-|\mu||}.$
Таким образом,  $K_{\mu}(z)\in L_{loc}^1(D\setminus\{z_0\}).$
Поскольку $f$ -- конечного искажения, из (\ref{eq1}) немедленно
следует, что $g$ также конечного искажения и при почти всех $z\in
D\setminus\{z_0\}$ выполнены соотношения
$|\partial g|\le |\partial g|+ |\overline{\partial} g|=
K^{1/2}_{\mu}(z)(|J(f, z)|)^{1/2},$
откуда по неравенству Гёльдера $|\partial g|\in L_{loc}^1
(D\setminus\{z_0\})$ и $|\overline{\partial} g|\in L_{loc}^1
(D\setminus\{z_0\}).$ Следовательно, $g\in
W_{loc}^{1,1}(D\setminus\{z_0\})$ и $g$ имеет конечное искажение.

В таком случае, $g$ продолжается до гомеоморфизма $g: D\rightarrow
{\Bbb C}$ ввиду \cite[лемма~4 и теорема~4]{Lom}. Тогда $\varphi$
продолжается по непрерывности в точку $g(z_0)$ области $g(D)$ ввиду
классической теоремы Пикара, что и доказывает теорему. $\Box$

\medskip
{\bf 3. Некоторые следствия и замечания.} Ещё один важный результат,
относящийся к устранению особенностей классов Орлича--Соболева,
касается функций конечного среднего колебания (см. \cite{MRSY} и
\cite{IR}).

\medskip
В дальнейшем нам понадобится следующее вспомогательное утверждение
(см., напр., \cite[лемма~7.4, гл.~7]{MRSY} либо
\cite[лемма~2.2]{RS$_1$}).

\medskip
\begin{proposition}\label{pr1A}
{\sl\, Пусть  $x_0 \in {\Bbb R}^n,$ $Q(x)$ -- измеримая по Лебегу
функция, $Q:{\Bbb R}^n\rightarrow [0, \infty],$ $Q\in
L_{loc}^1({\Bbb R}^n).$ Полагаем $A:=A(r_1,r_2,x_0)=\{ x\,\in\,{\Bbb
R}^n : r_1<|x-x_0|<r_2\}$ и
$\eta_0(r)=\frac{1}{Irq_{x_0}^{\frac{1}{n-1}}(r)},$ где
$I:=I=I(x_0,r_1,r_2)=\int\limits_{r_1}^{r_2}\
\frac{dr}{rq_{x_0}^{\frac{1}{n-1}}(r)}$ и
$q_{x_0}(r):=\frac{1}{\omega_{n-1}r^{n-1}}\int\limits_{|x-x_0|=r}Q(x)\,d{\mathcal
H}^{n-1}$ -- среднее интегральное значение функции $Q$ над сферой
$S(x_0, r).$ Тогда
\begin{equation}\label{eq10A}
\frac{\omega_{n-1}}{I^{n-1}}=\int\limits_{A} Q(x)\cdot
\eta_0^n(|x-x_0|)\ dm(x)\le\int\limits_{A} Q(x)\cdot
\eta^n(|x-x_0|)\ dm(x)
\end{equation}
для любой измеримой по Лебегу функции $\eta :(r_1,r_2)\rightarrow
[0,\infty]$ такой, что
$\int\limits_{r_1}^{r_2}\eta(r)dr=1. $ }
\end{proposition}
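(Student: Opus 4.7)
The plan is to pass to spherical coordinates around $x_0$ and reduce the $n$-dimensional integral to a one-dimensional integral against the spherical mean $q_{x_0}(r)$, then verify the equality by direct computation with $\eta_0$, and finally establish the inequality by an application of H\"older's inequality.

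First I would use Fubini's theorem (writing $dm(x) = d\mathcal{H}^{n-1}\, dr$ on the annulus $A$) to obtain, for any measurable $\eta:(r_1,r_2)\to[0,\infty]$,
$$\int_A Q(x)\,\eta^n(|x-x_0|)\,dm(x)\ =\ \omega_{n-1}\int_{r_1}^{r_2}\eta^n(r)\,r^{n-1}\,q_{x_0}(r)\,dr\,,$$
using the very definition of $q_{x_0}(r)$. Plugging in $\eta=\eta_0$, the factor $q_{x_0}(r)$ simplifies nicely against $q_{x_0}^{n/(n-1)}(r)$ in the denominator, the $r^{n-1}$ cancels with $r^n$, and what remains under the integral is exactly $\frac{1}{rq_{x_0}^{1/(n-1)}(r)}$, whose integral over $(r_1,r_2)$ is $I$ by definition. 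The $I^n$ in the denominator combined with this $I$ then yields $\omega_{n-1}/I^{n-1}$, establishing the equality in (\ref{eq10A}).

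For the inequality I would apply H\"older's inequality with conjugate exponents $p=n$ and $p^{\,\prime}=n/(n-1)$, splitting the integrand $\eta(r)$ as
$$\eta(r)\ =\ \Bigl[\eta(r)\,r^{(n-1)/n}\,q_{x_0}^{1/n}(r)\Bigr]\cdot \Bigl[r^{(n-1)/n}\,q_{x_0}^{1/n}(r)\Bigr]^{-1}.$$
Then the normalization $\int_{r_1}^{r_2}\eta(r)\,dr=1$ combined with H\"older gives
$$1\ \le\ \left(\int_{r_1}^{r_2}\eta^n(r)\,r^{n-1}\,q_{x_0}(r)\,dr\right)^{1/n}\cdot I^{\,(n-1)/n}\,,$$
since the second factor raised to the power $n/(n-1)$ produces exactly $\frac{1}{r\,q_{x_0}^{1/(n-1)}(r)}$, whose integral is $I$. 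Rearranging and multiplying by $\omega_{n-1}$ yields
$$\omega_{n-1}\int_{r_1}^{r_2}\eta^n(r)\,r^{n-1}\,q_{x_0}(r)\,dr\ \ge\ \frac{\omega_{n-1}}{I^{\,n-1}}\,,$$
which is exactly the required inequality after converting the left-hand side back to the integral over $A$ via Fubini.

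The only potential technical obstacle is ensuring that the manipulations are valid when $q_{x_0}(r)$ vanishes or is infinite on a set of positive measure, but since $Q\in L^1_{loc}$ one has $q_{x_0}(r)<\infty$ for a.e. $r$, and the cases $q_{x_0}(r)=0$ or $\infty$ are handled by standard conventions ($0\cdot\infty=0$) in the definitions of $\eta_0$ and $I$; the rest is a routine change of variables and H\"older estimate, so no further essential difficulty is expected.
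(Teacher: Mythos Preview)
Your argument is correct: the reduction to a one-dimensional integral via Fubini and the definition of $q_{x_0}(r)$, the direct verification of the equality for $\eta_0$, and the H\"older step with exponents $n$ and $n/(n-1)$ all go through as you describe. Note, however, that the paper does not actually supply its own proof of this proposition; it is quoted as an auxiliary result with references to \cite[Lemma~7.4, Ch.~7]{MRSY} and \cite[Lemma~2.2]{RS$_1$}. Your proof is precisely the standard argument found in those sources, so there is nothing to compare beyond saying that you have reconstructed the cited proof correctly.
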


\medskip Будем говорить, что локально интегрируемая
функция ${\varphi}:D\rightarrow{\Bbb R}$ имеет {\it конечное среднее
колебание} в точке $x_0\in D$, пишем $\varphi\in FMO(x_0),$ если
%
%
%
%
$\limsup\limits_{\varepsilon\rightarrow
0}\frac{1}{\Omega_n\varepsilon^n}\int\limits_{B( x_0,\,\varepsilon)}
|{\varphi}(x)-\overline{{\varphi}}_{\varepsilon}|\, dm(x)<\infty,$
%
%
где
$\overline{{\varphi}}_{\varepsilon}=\frac{1}
{\Omega_n\varepsilon^n}\int\limits_{B(x_0,\,\varepsilon)}
{\varphi}(x)\, dm(x).$
\medskip
Заметим, что, как известно, $\Omega_n\varepsilon^n=m(B(x_0,
\varepsilon)).$ Имеет место следующая

\medskip
\begin{theorem}\label{th3}
{\sl\, Пусть $n\ge 3,$ $x_0\in D,$ тогда каждое открытое, дискретное
и замкнутое ограниченное отображение $f:D\setminus\{x_0\}\rightarrow
{\Bbb R}^n$ класса $W_{loc}^{1, \varphi}(D\setminus\{x_0\})$ с
конечным искажением такое, что $C(f, x_0)\cap C(f,
\partial D)=\varnothing,$ продолжается в точку $x_0$
непрерывным образом до отображения $f:D\rightarrow {\Bbb R}^n,$ если
выполнено условие (\ref{eqOS3.0a}) и, кроме того, найдётся функция
$Q\in L_{loc}^{1}(D),$ такая что $K_I(x, f)\le Q(x)$ при почти всех
$x\in D$ и $Q\in FMO (x_0).$}
\end{theorem}

\medskip
\begin{proof}
Достаточно показать, что условие $Q\in FMO(x_0)$ влечёт расходимость
интеграла (\ref{eq9}), поскольку в этому случае необходимое
заключение будет следовать из теоремы \ref{th1}. Заметим, что для
функций класса $FMO$  в точке $x_0$
\begin{equation}\label{eq31*}
\int\limits_{\varepsilon<|x|<{e_0}}\frac{Q(x+x_0)\, dm(x)}
{\left(|x| \log \frac{1}{|x|}\right)^n} = O \left(\log\log
\frac{1}{\varepsilon}\right)
\end{equation}
при  $\varepsilon \rightarrow 0 $ и для некоторого $e_0>0,$ $e_0 \le
{\rm dist}\,\left(0,\partial D\right).$ При $\varepsilon_0<r_0:={\rm
dist}\,\left(0,\partial D\right)$ полагаем
$\psi(t):=\frac{1}{t\,\log{\frac1t}},$ $I(\varepsilon,
\varepsilon_0):=\int\limits_{\varepsilon}^{\varepsilon_0}\psi(t) dt
=\log{\frac{\log{\frac{1}
{\varepsilon}}}{\log{\frac{1}{\varepsilon_0}}}}$ и
$\eta(t):=\psi(t)/I(\varepsilon, \varepsilon_0).$ Заметим, что
$\int\limits_{\varepsilon}^{\varepsilon_0}\eta(t)dt=1,$ кроме того,
из соотношения (\ref{eq31*}) вытекает, что
\begin{equation}\label{eq32*}
\frac{1}{I^n(\varepsilon,
\varepsilon_0)}\int\limits_{\varepsilon<|x|<\varepsilon_0}
Q(x)\cdot\psi^n(|x|)
 \ dm(x)\le C\left(\log\log\frac{1}{\varepsilon}\right)^{1-n}\rightarrow
 0
 \end{equation}
при $\varepsilon\rightarrow 0.$ Из соотношений (\ref{eq10A}) и
(\ref{eq32*}) вытекает, что интеграл вида (\ref{eq9}) расходится,
что и требовалось установить.
\end{proof} $\Box$

\medskip
Следующее утверждение показывает, что в условиях теорем \ref{th1} и
\ref{th3} требования на функцию $Q$ нельзя, вообще говоря, заменить
условием $Q\in L^p$ ни для какого (сколь угодно большого) $p>0$ и
для любой неубывающей функции $\varphi(t).$ Для простоты рассмотрим
случай, когда $D={\Bbb B}^n,$ $n\ge 3.$

\medskip
\begin{theorem}\label{th3.10.1}{\sl\,
Пусть $\varphi:[0,\infty)\rightarrow[0,\infty)$ -- произвольная
неубывающая функция. Для каждого $p\ge 1$ существуют функция
$Q:{\Bbb B}^n\rightarrow [1, \infty],$ $Q(x)\in L^p({\Bbb B}^n)$ и
равномерно ограниченный гомеоморфизм $g:{\Bbb
B}^n\setminus\{0\}\rightarrow {\Bbb R}^n,$ $g\in W_{loc}^{1,
\varphi}({\Bbb B}^n\setminus\{0\}),$ имеющий конечное искажение,
такой что $K_I(x, f)\le Q(x),$ при этом, $g$ не продолжается по
непрерывности в точку $x_0=0.$}
\end{theorem}

\medskip
\begin{proof} Рассмотрим следующий пример.
Зафиксируем числа $p\ge 1$ и $\alpha\in \left(0, n/p(n-1)\right).$
Можно считать, что $\alpha<1$ в силу произвольности выбора $p.$
Зададим гомеоморфизм $g:{\Bbb B}^n\setminus\{0\}\rightarrow {\Bbb
R}^n$ следующим образом:
$g(x)=\frac{1+|x|^{\alpha}}{|x|}\cdot x.$
Заметим, что отображение $g$ переводит шар $D={\Bbb B}^n$ в кольцо
$D^{\,\prime}=B(0,2)\setminus {\Bbb B}^n,$ при этом, $C(g, 0)={\Bbb
S}^{n-1}$ (отсюда вытекает, что $g$ не имеет предела в нуле).
Заметим, что $g\in C^{1}({\Bbb B}^n\setminus \{0\}),$ в частности,
$g\in W_{loc}^{1,1}.$

\medskip
Далее, в каждой точке $x\in {\Bbb B}^n\setminus \{0\}$ отображения
$g:{\Bbb B}^n\setminus \{0\}\rightarrow {\Bbb R}^n$ вычислим
внутреннюю дилатацию отображения $g$ в точке $x,$ воспользовавшись
правилом (\ref{eq41}). Поскольку $g$ имеет вид
$g(x)=\frac{x}{|x|}\rho(|x|),$ прямым подсчётом соответствующих
производных по направлению можно убедиться, что в качестве главных
векторов $e_{i_1},\ldots, e_{i_n}$ и $\widetilde{e_{i_1}},\ldots,
\widetilde{e_{i_n}}$ можно взять $(n-1)$ линейно независимых
касательных векторов к сфере $S(0, r)$ в точке $x_0,$ где $|x_0|=r,$
и один ортогональный к ним вектор в указанной точке. Соответствующие
главные растяжения (называемые, соответственно, {\it касательными
растяжениями} и {\it радиальным растяжением}) равны
$\lambda_{\tau}(x_0):=\lambda_{i_1}(x_0)=\ldots=\lambda_{i_{n-1}}(x_0)=\frac{\rho(r)}{r}$
и $\lambda_{r}(x_0):=\lambda_{i_n}=\rho^{\,\prime}(r),$
соответственно.

Согласно сказанному, $\lambda_{\tau}(x)=\frac{|x|^{\alpha}+1}{|x|},$
$\lambda_{r}(x)=\alpha|x|^{\alpha-1},$
$l(g^{\,\prime}(x))=\alpha|x|^{\alpha-1},$ $\Vert
g^{\,\prime}(x)\Vert=\frac{|x|^{\alpha}+1}{|x|},$ $|J(x, g)|=
\left(\frac{|x|^{\alpha}+1}{|x|}\right)^{n-1}\cdot
\alpha|x|^{\alpha-1}$ и
$K_I(x, g)=\left(\frac{1+|x|^{\,\alpha}}{\alpha
|x|^{\,\alpha}}\right)^{n-1}.$
Заметим, что если $G$ -- произвольная компактная область в ${\Bbb
B}^n\setminus\{0\},$ то $\Vert g^{\,\prime}(x)\Vert\le c(G)<\infty,$
кроме того, нетрудно видеть, что $|\nabla g(x)|\le n^{1/2}\cdot\Vert
g^{\,\prime}(x)\Vert$ при почти всех $x\in {\Bbb
B}^n\setminus\{0\}.$ Тогда ввиду неубывания функции $\varphi$
выполнено:
$\int\limits_{G} \varphi(|\nabla
g(x)|)dm(x)\le\varphi(n^{1/2}c(G))\cdot m(G)<\infty,$ т.е., $g\in
W^{1, \varphi}(G).$
Заметим, что отображение $g$ имеет конечное искажение, поскольку его
якобиан почти всюду не равен нулю; кроме того, $K_I(x, g)=Q(x),$ где
$Q=\left(\frac{1+|x|^{\,\alpha}}{\alpha
|x|^{\,\alpha}}\right)^{n-1},$ и
$Q(x)\le \frac{C}{|x|^{\alpha(n-1)}}\,,\quad
C:=\left(\frac{2}{\alpha}\right)^{n-1}.$ Таким образом, получаем:
$$\int\limits_{{\Bbb B}^n}\left(Q(x)\right)^p dm(x)\le C^p
\int\limits_{{\Bbb B}^n}\frac{dm(x)}{|x|^{p\alpha(n-1)}}=$$
\begin{equation}\label{eq2.3A}=C^p\int\limits_0^1\int\limits_{S(0,
r)}\frac{d{\mathcal{A}}}{|x|^{p\alpha(n-1)}}\,dr=\omega_{n-1}C^p
\int\limits_0^1\frac{dr}{r^{(n-1)(p\alpha-1)}}\,.\end{equation}
Хорошо известно, что интеграл
$I:=\int\limits_0^1\frac{dr}{r^{\beta}}$ сходится при $\beta<1.$
Таким образом, интеграл в правой части соотношения (\ref{eq2.3A})
сходится, поскольку показатель степени $\beta:=(n-1)(p\alpha-1)$
удовлетворяет условию $\beta<1$ при $\alpha\in (0, n/p(n-1)).$
Отсюда вытекает, что $Q(x)\in L^p({\Bbb B}^n).$
\end{proof}$\Box$

\medskip
Следующее утверждение содержит в себе заключение о том, что условие
(\ref{eq9}) является не только достаточным, но в некотором смысле и
необходимым условием возможности непрерывного продолжения
отображения в изолированную граничную точку.

\medskip
\begin{theorem}\label{th5} {\sl Пусть $\varphi:[0,\infty)\rightarrow[0,\infty)$ -- произвольная
неубывающая функция и $0<\varepsilon_0<1.$ Для каждой измеримой по
Лебегу функции $Q:{\Bbb B}^n\rightarrow [1, \infty],$ $Q\in
L_{loc}({\Bbb B}^n),$ такой, что
$\int\limits_{0}^{\varepsilon_0}\frac{dt}{tq_{0}^{\,\frac{1}{n-1}}(t)}<\infty,$
найдётся ограниченное отображение $f\in W_{loc}^{1, \varphi}({\Bbb
B}^n\setminus\{0\})$ с конечным искажением, которое не может быть
продолжено в точку $x_0=0$ непрерывным образом, при этом, $K_I(x,
f)\le \widetilde{Q}(x)$ п.в., где $\widetilde{Q}(x)$ -- некоторая
измеримая по Лебегу функция, такая что
$\widetilde{q}_0(r):=\frac{1}{\omega_{n-1}r^{n-1}}\int\limits_{S(0,
r)}\widetilde{Q}(x)d{\mathcal H}^{n-1}=q_0(r)$ для почти всех $r\in
(0, 1).$}
\end{theorem}

\begin{proof}
Определим отображение $f:{\Bbb B}^n\setminus\{0\}\rightarrow {\Bbb
R}^n$ следующим образом: $f(x)=\frac{x}{|x|}\rho(|x|),$ где
$\rho(r)=\exp\left\{-\int\limits_{r}^1\frac{dt}{tq_{0}^{1/(n-1)}(t)}\right\}.$
Заметим, что $f\in ACL$ и отображение $f$ дифференцируемо почти
всюду в ${\Bbb B}^n\setminus\{0\}.$ Ввиду техники, изложенной перед
формулировкой леммы \ref{thOS4.1},
$\Vert
f^{\,\prime}(x)\Vert=\frac{\exp\left\{-\int\limits_{|x|}^1\frac{dt}{tq_{0}^{1/(n-1)}(t)}\right\}}{|x|},$
$l(f^{\,\prime}(x))=\frac{\exp\left\{-\int\limits_{|x|}^1\frac{dt}{tq_{0}^{1/(n-1)}(t)}\right\}}
{|x|q_{0}^{1/(n-1)}(|x|)}$ и $|J(x, f)|=\frac{\exp\left\{-n\int
\limits_{|x|}^1\frac{dt}{tq_{0}^{1/(n-1)}(t)}\right\}}{|x|^nq_{0}^{1/(n-1)}(|x|)}.$
Заметим, что $J(x, f)\ne 0$ при почти всех $x,$ значит, $f$ --
отображение с конечным искажением. Кроме того, отметим, что
$\varphi(|\nabla f(x)|)\in L_{loc}^1({\Bbb B}^n\setminus\{0\}),$
поскольку $\Vert f^{\,\prime}(x)\Vert$ локально ограничена в ${\Bbb
B}^n\setminus\{0\},$ а $\varphi$ -- неубывающая функция. Путём
непосредственных вычислений убеждаемся, что $K_I(x, f)=q_{0}(|x|).$
Полагаем $\widetilde{Q}(x):=q_{0}(|x|),$ тогда будем иметь, что
$\widetilde{q}_0(r)=q_0(r)$ для почти всех $r\in (0, 1).$ Наконец,
заметим, что отображение $f$ не может быть продолжено по
непрерывности в точку $x_0=0$ ввиду конструкции этого отображения, а
также условия
$\int\limits_{0}^{\varepsilon_0}\frac{dt}{tq_{0}^{\,\frac{1}{n-1}}(t)}<\infty.$
$\Box$
\end{proof}

\medskip

\medskip
КОНТАКТНАЯ ИНФОРМАЦИЯ

\medskip
\noindent{{\bf Евгений Александрович Севостьянов} \\
Житомирский государственный университет им.\ И.~Франко\\
кафедра математического анализа, ул. Большая Бердичевская, 40 \\
г.~Житомир, Украина, 10 008 \\ тел. +38 066 959 50 34 (моб.),
e-mail: esevostyanov2009@mail.ru}

\end{document}